\documentclass[11pt]{amsart}
\usepackage{amsfonts,amssymb,amsmath,amsthm}
\usepackage{url}
\usepackage{enumerate}
\usepackage[dvipdfmx]{graphicx}

\urlstyle{sf}
\newtheorem{theorem}{Theorem}[section]
\newtheorem{example}[theorem]{Example}
\newtheorem{lemma}[theorem]{Lemma}

\theoremstyle{definition}
\newtheorem{definition}[theorem]{Definition}

\numberwithin{equation}{section}

\author{Yujie Gu}
\address{
Department of Electrical Engineering--Systems, Tel Aviv University, Tel Aviv 6997801, Israel}
\email{guyujie2016@gmail.com}

\author{Shohei Satake}
\address{
Graduate School of System Informatics, Kobe University \\
Rokkodai 1-1, Nada, Kobe, 657-8501, JAPAN}
\email{155x601x@stu.kobe-u.ac.jp}

\keywords{Parent-identifying set system, broadcast encryption, construction}

\subjclass[2010]{94B25, 05D05}

\begin{document}

\title[On $2$-parent-identifying set systems of block size $4$]
{On $2$-parent-identifying set systems \\of block size $4$}

\maketitle
\begin{abstract}
Parent-identifying set system is a kind of combinatorial structures with applications to broadcast encryption.
In this paper we investigate the maximum number of blocks $I_2(n,4)$ in a $2$-parent-identifying set system with ground set size $n$ and block size $4$.
The previous best-known lower bound states that $I_2(n,4)=\Omega(n^{4/3+o(1)})$. 
We improve this lower bound by showing that $I_2(n,4)= \Omega(n^{3/2-o(1)})$ using techniques in additive number theory.
\end{abstract}

\section{Introduction}
\label{intro}

Traitor tracing was introduced for broadcast encryption in order to protect the copyrighted digital contents~\cite{Chor1994,Chor2000}.
Over the recent decades, several kinds of combinatorial structures which could be applied for the key-distribution schemes against the piracy were proposed and extensively investigated, see \cite{Boneh1998,Chor2000,Collins2009,Hollmann1998,Stinson1998} for example.
In this paper our discussion is based on the combinatorial model introduced in~\cite{Stinson1998}, which could be briefly described in the following.

A dealer, who possesses the copyright of the data, has a set $\mathcal{X}$ of $n$ base decryption keys.
The dealer would assign each authorized user, who purchased the copyright of data, $k$ based keys (i.e. a $k$-subset of $\mathcal{X}$), which, based on a threshold secret sharing scheme, could be used to decrypt the encrypted contents~\cite{Stinson1998}.
In this setting, we could assume an $(n,k)$ \textit{set system} $(\mathcal{X},\mathcal{B})$ where $\mathcal{X}$ is the ground set of $n$ base keys, and $\mathcal{B}$ is a family of $k$-subsets of $\mathcal{X}$ representing all the authorized users.
In a set system $(\mathcal{X},\mathcal{B})$, each element of $\mathcal{X}$ is called a \textit{point}, and each element of $\mathcal{B}$ is referred to as a \textit{block}.
A $t$-collusion means that $t$ dishonest users (traitors) $B_1,\ldots,B_t\in \mathcal{B}$ work together to generate a $k$-subset (pirate)
$T\subseteq \cup_{1\le i\le t}B_i$ and illegally redistribute $T$ to the unauthorized users.
To hinder the illegal redistribution of the decryption key,
once such pirate $T$ is confiscated, the dealer would like to trace back to at least one or more traitors in the coalition.
This requires that the set system $(\mathcal{X},\mathcal{B})$ should have some desired properties.
Parent-identifying set system, which was proposed in~\cite{Collins2009} as a variant of codes with the identifiable parent property (i.e. parent-identifying codes) in~\cite{Hollmann1998}, could provide a kind of traceability as follows.

\begin{definition}\label{defIPPS}
A \textit{$t$-parent-identifying set system}, denoted as $t$-IPPS$(n,k)$, is a pair $(\mathcal{X},\mathcal{B})$ such that $|\mathcal{X}|=n$, $\mathcal{B}\subseteq \binom{\mathcal{X}}{k}=\{F\subseteq \mathcal{X}: |F|=k\}$, with the property that for any $k$-subset $T\subseteq \mathcal{X}$, either $P_t(T)$ is empty, or
\begin{equation*}
\bigcap_{\mathcal{P}\in P_t(T)}\mathcal{P}\ne \emptyset,
\end{equation*}
where
\begin{equation*}
P_t(T)=\left\{\mathcal{P}\subseteq \mathcal{B}:\ |\mathcal{P}|\le t,\ T\subseteq \bigcup_{B\in \mathcal{P}}B\right\}.
\end{equation*}
\end{definition}

For a set $T$ and a subset $\mathcal{P}\subseteq \mathcal{B}$, if $T\subseteq \bigcup_{B\in \mathcal{P}}B$, then we say $\mathcal{P}$ is a \textit{possible parent set} of $T$.
As we can see, a key-distribution scheme based on a $t$-parent-identifying set system could identify at least one traitor in a collusion with at most $t$ colluders.
Indeed, if a pirate $T$ is captured, one could first find out $P_t(T)$ which is the collection of all the possible parent sets of $T$ with cardinality at most $t$. Then the guys who exist in every $\mathcal{P}\in P_t(T)$ must be colluders for generating pirate $T$.

The number of blocks $B \in \mathcal{B}$ is called the \textit{size} of this $t$-IPPS$(n,k)$.
Denote $I_t(n,k)$ as the maximum size of a $t$-IPPS$(n,k)$.
An $(n,k)$ set system $(\mathcal{X},\mathcal{B})$ which is a $t$-IPPS$(n,k)$ is called \textit{optimal} if it has size $I_t(n,k)$. Notice that in the practical application, $I_t(n,k)$ corresponds to the maximum number of users which could be accommodated in the collusion-resistant system. In what follows, we are interested with the value of $I_t(n,k)$.

Throughout the paper we use the standard asymptotic notations. Let $f(n)>0$, $g(n)>0$ for any positive integer $n$. Then (1) $f(n)=o(g(n))$ as $n \to \infty$ if $\displaystyle{\lim_{n\to \infty }}  {f(n)}/{g(n)}=0$;
(2) $f(n)=O(g(n))$ as $n \to \infty$ if $\displaystyle{\limsup_{n\to \infty }} {f(n)}/{g(n)}< \infty$;
(3) $f(n)=\Omega(g(n))$ as  $n \to \infty$ if $\displaystyle{\liminf_{n\to \infty }}  {f(n)}/{g(n)}> 0$.
We will omit the suffix ``as $n \to \infty$" when it is clear from the context.

In the literature, the best known general upper bound for $I_t(n,k)$ is due to~\cite{GM2016}.

\begin{theorem}[\cite{GM2016}]\label{newupboundIPPS}
Let $n\ge k\ge 2,\ t\ge 2$ be integers. Then
\begin{equation*}
I_t(n,k)\le \binom{n}{\lceil\frac{k}{\lfloor t^2/4\rfloor+t}\rceil}=O(n^{\lceil\frac{k}{\lfloor t^2/4\rfloor+t}\rceil}),
\end{equation*}
as $n\to \infty$.
\end{theorem}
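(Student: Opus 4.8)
The plan is to deduce from the identifying property a clean, purely set-theoretic restriction on $\mathcal{B}$, and then to bound the number of blocks by an injection into $\binom{\mathcal{X}}{s}$, where $s=\lceil k/r\rceil$ and $r=\lfloor t^2/4\rfloor+t$. Since $\binom{n}{s}=O(n^s)$ for fixed $s$, this immediately gives the stated $O(n^{\lceil k/r\rceil})$, so the whole difficulty is concentrated in producing an injection with the correct exponent.

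First I would isolate the following consequence of Definition~\ref{defIPPS}: in a $t$-IPPS$(n,k)$ no $k$-subset $T$ can have two \emph{disjoint} possible parent sets of size at most $t$. Indeed, if $\mathcal{P}_1,\mathcal{P}_2\in P_t(T)$ satisfy $\mathcal{P}_1\cap\mathcal{P}_2=\emptyset$, then $P_t(T)\neq\emptyset$, so the identifying property forces $\bigcap_{\mathcal{P}\in P_t(T)}\mathcal{P}\neq\emptyset$; but this intersection is contained in $\mathcal{P}_1\cap\mathcal{P}_2=\emptyset$, a contradiction. Specializing to $T=B$ for a block $B$, so that $\{B\}\in P_t(T)$, shows that $B$ is never contained in $\bigcup_{B'\in\mathcal{P}}B'$ for a family $\mathcal{P}\subseteq\mathcal{B}\setminus\{B\}$ with $|\mathcal{P}|\le t$; that is, $\mathcal{B}$ is a $t$-cover-free family. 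This single-block specialization is already a restriction of the right shape, with $t$ in place of $r$ in the denominator, but it is strictly weaker than what is needed, which is why the genuinely two-coalition part of the condition must be exploited.

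The heart of the argument, and the step I expect to be the main obstacle, is to realize the denominator $r=\lfloor t^2/4\rfloor+t$ rather than merely $t$. The plan is to attach to each block $B$ a \emph{fingerprint} $\sigma(B)\in\binom{\mathcal{X}}{s}$ and to prove that a collision $\sigma(B)=\sigma(B')$ with $B\neq B'$ can be completed, using suitably chosen auxiliary blocks, to a $k$-subset $T$ carrying two disjoint possible parent sets of size at most $t$, contradicting the reduction above. What pins the exponent to $\lceil k/r\rceil$ is the capacity of such a doubly-covered configuration: when two disjoint coalitions of at most $t$ blocks each must jointly certify the same $k$ points, the number of points they can efficiently account for is maximized by balancing the $t$ blocks of a coalition as $t=\lfloor t/2\rfloor+\lceil t/2\rceil$, which yields the cross term $\lfloor t/2\rfloor\lceil t/2\rceil=\lfloor t^2/4\rfloor$ together with an additive contribution of $t$. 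Establishing this extremal bound precisely, and verifying that it is the binding constraint, is the delicate combinatorial core; it parallels the known upper-bound analysis for codes with the identifiable parent property, where the same constant $\lfloor t^2/4\rfloor+t$ governs the exponent.

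Once such an injection $\mathcal{B}\hookrightarrow\binom{\mathcal{X}}{s}$ is in hand, the conclusion is immediate: $|\mathcal{B}|\le\binom{n}{s}=\binom{n}{\lceil k/(\lfloor t^2/4\rfloor+t)\rceil}$, and since $k$ and $t$ are fixed while $n\to\infty$, this binomial coefficient is $O(n^{\lceil k/(\lfloor t^2/4\rfloor+t)\rceil})$, as claimed.
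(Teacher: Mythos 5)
First, note that the paper itself offers no proof of this statement: Theorem~\ref{newupboundIPPS} is quoted from~\cite{GM2016}, so your proposal can only be measured on its own merits, and on those merits it has a genuine gap. Your two opening reductions are correct and worth keeping: the identifying property does forbid two disjoint possible parent sets of size at most $t$ (the intersection over $P_t(T)$ would otherwise be empty), and specializing to $T=B$ does show that $\mathcal{B}$ is $t$-cover-free. But everything that makes the theorem true with denominator $r=\lfloor t^2/4\rfloor+t$ rather than $t$ is contained in the step you explicitly defer: you never define the fingerprint $\sigma(B)$, never say which auxiliary blocks are ``suitably chosen,'' and never prove that a forbidden configuration can actually be assembled; the remark that the balancing $\lfloor t/2\rfloor\lceil t/2\rceil+t$ ``parallels the known upper-bound analysis for IPP codes'' is an appeal to the literature (essentially to Blackburn-type arguments, whose adaptation to set systems is precisely the content of~\cite{GM2016}), not an argument. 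A plan whose central lemma is labelled ``the delicate combinatorial core'' and left unestablished is not a proof of the theorem.

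Moreover, the one concrete mechanism you do propose is set up backwards. You want an arbitrary collision $\sigma(B)=\sigma(B')$, $B\neq B'$, to yield a violation; but two blocks of a $t$-IPPS may legitimately share an $s$-subset --- for instance with $t=2$, $k=4$, $s=\lceil 4/3\rceil=2$, the two blocks $\{1,2,3,4\}$ and $\{1,2,5,6\}$ form a perfectly valid $2$-IPPS while agreeing on $\{1,2\}$ --- so no contradiction can be extracted from a collision of arbitrarily chosen fingerprints. The correct architecture is the reverse: one must prove that every block $B$ \emph{possesses} an $s$-subset contained in no other block (an ``own'' $s$-subset), after which injectivity of $B\mapsto\sigma(B)$ and the bound $|\mathcal{B}|\le\binom{n}{s}$ are automatic. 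That existence claim is where the difficulty sits: if every $s$-subset of $B$ lay in some other block, a partition of $B$ into $r$ parts of size at most $s$ produces $r$ covering blocks, but since $r>t$ these do not form an admissible coalition, so the naive argument only reproves the cover-free bound with exponent $\lceil k/t\rceil$. One must instead reorganize those $r$ covering blocks into \emph{two disjoint} coalitions of size at most $t$ each, covering a common $k$-set $T$ that is in general a mixture of points from several blocks rather than $B$ itself; the count $\lfloor t^2/4\rfloor+t=\max\{ab+a+b:\ a+b\le t\}$ emerges from exactly this reorganization. Your proposal contains no argument for that step, and without it the exponent $\lceil k/(\lfloor t^2/4\rfloor+t)\rceil$ is not established.
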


The best known general lower bound for $I_t(n,k)$ is from~\cite{GCKM} via a probabilistic method.

\begin{theorem}[\cite{GCKM}]\label{newlowIPPS}
Let $k$ and $t$ be positive integers such that $t\ge 2$. Then there exists a constant $c$, depending only on $k$ and $t$, with the following property. For any sufficiently large integer $n$, there exists a $t$-IPPS$(n,k)$ with size at least $c n^{\frac{k}{\mu-1}}$,
that is, $I_t(n,k)\ge c n^{\frac{k}{\mu-1}}$, where $\mu=\lfloor(\frac{t}{2}+1)^2\rfloor$.
\end{theorem}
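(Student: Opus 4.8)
The plan is to prove the bound by the probabilistic deletion (alteration) method, with the exponent governed by an extremal density parameter that I will identify with $\mu$. First I would reformulate the $t$-IPPS property as the avoidance of a finite family of \emph{bad configurations}. By Definition~\ref{defIPPS}, a set system fails to be a $t$-IPPS exactly when some $k$-set $T$ has $P_t(T)\ne\emptyset$ yet $\bigcap_{\mathcal P\in P_t(T)}\mathcal P=\emptyset$; passing to inclusion-minimal witnesses, such a configuration is a $k$-set $T$ together with a bounded number $m\le m_0(k,t)$ of irredundant blocks whose traces on $T$ admit at least two covers of size $\le t$ with empty common intersection. I would tag each type by its number of blocks $m$, its number of ground points $v$, and its total incidence excess $E:=\sum_{p}(d_p-1)=km-v$, where $d_p$ is the number of configuration-blocks through the point $p$. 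Since $k$ and $t$ are fixed, only finitely many types arise.

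The alteration step then reduces the problem to a single optimization. Include each of the $\binom{n}{k}=\Theta(n^{k})$ potential blocks independently with probability $p=n^{-\gamma}$, so that $\mathbb{E}|\mathcal B|=\Theta(n^{k-\gamma})$, while a type with parameters $(m,v)$ occurs $\Theta(n^{v}p^{m})=\Theta(n^{\,v-\gamma m})$ times in expectation. Deleting one block from each bad configuration destroys all of them and leaves a genuine $t$-IPPS, which retains $\Omega(n^{k-\gamma})$ blocks provided $v-\gamma m\le k-\gamma$ for every type, i.e.\ provided $\gamma\ge k-E/(m-1)$ after substituting $v=km-E$. Optimizing $\gamma$ shows that the achievable exponent is precisely $\min_{\mathcal C}E(\mathcal C)/(m(\mathcal C)-1)$, the minimum taken over all minimal bad configuration types $\mathcal C$.

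It then remains to prove that this minimum equals $k/(\mu-1)$. Writing $\mu-1=\lfloor t^{2}/4\rfloor+t=(\lfloor t/2\rfloor+1)(\lceil t/2\rceil+1)-1$, the natural extremizer is the \emph{double-cover} configuration in which every point of $T$ lies in exactly two blocks (so $E=k$) and the number of blocks is pushed as high as possible while the trace-cover structure stays ambiguous; this gives ratio $k/(m-1)$, minimized by maximizing $m$. Modeling the doubly-covered points of $T$ as edges of a multigraph on the blocks, a size-$\le t$ parent set becomes a vertex cover of size $\le t$, and ambiguity becomes the condition that no vertex lies in every such cover. I would construct the extremal instance as a grid/product of two ``stars,'' obtaining $\mu=(\lfloor t/2\rfloor+1)(\lceil t/2\rceil+1)$ blocks and hence ratio $k/(\mu-1)$; note that for $t\ge4$ the relevant covers cannot be chosen pairwise block-disjoint, so the ambiguity is of the genuinely Helly-type (odd-cycle-like) kind rather than two block-disjoint parent sets.

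The hard part is the matching extremal \emph{upper} bound: proving $E/(m-1)\ge k/(\mu-1)$ for \emph{every} minimal bad configuration, i.e.\ that no type beats the grid. This is an upper bound on the number of blocks sustainable at a given excess, and it is essentially the same combinatorial core that underlies the upper bound in Theorem~\ref{newupboundIPPS}; in particular one must show that configurations with triple- or higher-covered points (larger $E$) cannot gain enough extra blocks to lower the ratio. Once the optimum is pinned to $k/(\mu-1)$, the deletion argument with $\gamma=k-k/(\mu-1)$ yields a $t$-IPPS of size $\Omega(n^{k/(\mu-1)})$; absorbing the implied constant gives the stated $c\,n^{k/(\mu-1)}$, and since only finitely many types enter the union bound there is no $o(1)$ loss in the exponent.
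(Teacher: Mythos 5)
Your probabilistic scaffolding is sound, and it is in fact the strategy of the cited source \cite{GCKM} (this paper itself only quotes the theorem and gives no proof): reduce to inclusion-minimal bad configurations, include each $k$-set independently with probability $n^{-\gamma}$, delete one block per bad configuration, and optimize $\gamma$. Your computation that the achievable exponent equals $\min_{\mathcal{C}} E(\mathcal{C})/(m(\mathcal{C})-1)$ over minimal bad configuration types is correct. The genuine gap sits exactly where you yourself place ``the hard part'': the inequality $E(\mathcal{C})/(m(\mathcal{C})-1)\ge k/(\mu-1)$ for every minimal bad configuration is the entire mathematical content of the theorem, and your proposal does not prove it --- it only asserts that it is ``essentially the same combinatorial core'' as Theorem~\ref{newupboundIPPS}, which is not a proof (that theorem bounds $I_t(n,k)$ itself, not the block count or excess of a minimal violating family). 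With only your minimality reduction, one gets the crude bound $m\le m_0(k,t)$ of order $t(t+1)$ (one parent set of size $\le t$, plus for each of its blocks a parent set avoiding that block), and the deletion method then yields only the exponent $k/(m_0-1)$, strictly weaker than the claimed $k/(\mu-1)$.

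The missing inequality splits cleanly into two claims, and making this split explicit shows what is easy and what is not. First, $E(\mathcal{C})\ge k$ for \emph{every} violating configuration: each point of the pirate set $T$ must lie in at least two blocks of the configuration, since a point lying in a unique block $B$ would force $B$ into every possible parent set of $T$, making $\bigcap_{\mathcal{P}\in P_t(T)}\mathcal{P}\ne\emptyset$. This is elementary; you implicitly use it when describing the ``double-cover'' extremizer but never state or prove it as a fact about all bad configurations. Second, $m(\mathcal{C})\le\mu$ for every inclusion-minimal violating configuration: this is precisely part (2) of the lemma quoted in Section~\ref{sec-discussion} (a set system is a $t$-IPPS if and only if all subfamilies of size at most $\mu$ are), and it is the genuinely nontrivial combinatorial lemma of \cite{GCKM} --- one must show that from any collection of $\le t$-covers of $T$ with empty common intersection, one can extract such a collection supported on at most $\lfloor(t/2+1)^2\rfloor$ blocks, including the Helly-type ambiguities with no two disjoint covers that you correctly flag for $t\ge 4$. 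These two claims combined give $E/(m-1)\ge k/(\mu-1)$ and finish the proof; your grid-of-two-stars construction addresses only the opposite, unneeded direction (that the method cannot beat $k/(\mu-1)$). Until $m\le\mu$ is proven or explicitly invoked as a known lemma, the proposal does not establish the stated exponent.
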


For a $2$-IPPS$(n,k)$, the following lemma provides an equivalent description as Definition~\ref{defIPPS}.

\begin{lemma}[\cite{GCKM}]\label{2IPPSabc}
An $(n,k)$ set system $(\mathcal{X},\mathcal{B})$ is a $2$-IPPS$(n,k)$ if and only if the following cases hold.
\begin{description}
  \item[(IPPSa)]\quad For any three distinct blocks $A,B,C\in \mathcal{B}$, we have
  \begin{equation*}
  |(A\cup B)\cap (A\cup C)\cap (B\cup C)|< k.
  \end{equation*}
  \item[(IPPSb)]\quad For any four distinct blocks $A_1,A_2,B_1,B_2\in \mathcal{B}$, we have
  \begin{equation*}
  |(A_1\cup A_2)\cap (B_1\cup B_2)|< k.
  \end{equation*}
\end{description}
\end{lemma}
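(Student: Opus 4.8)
The plan is to prove the two directions of the equivalence via their contrapositives, using the observation that $(A\cup B)\cap(A\cup C)\cap(B\cup C)$ is precisely the set of points lying in at least two of the blocks $A,B,C$. First I would record two elementary facts. (i) If $\{F\}\in P_2(T)$ is a singleton parent set, then $T\subseteq F$ together with $|T|=|F|=k$ forces $T=F$; hence the only singleton that can occur in $P_2(T)$ is $\{T\}$ itself, and only when $T\in\mathcal{B}$. (ii) Viewing each parent set $\mathcal{P}\in P_2(T)$ as a vertex or an edge on the ``block graph'' whose vertices are the blocks of $\mathcal{B}$, the condition $\bigcap_{\mathcal{P}\in P_2(T)}\mathcal{P}=\emptyset$ becomes a statement about when a family of sets of size at most $2$ has empty total intersection.

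For the direction that a $2$-IPPS satisfies (IPPSa) and (IPPSb), I argue the contrapositive. If (IPPSa) fails for distinct $A,B,C$, then the set $S$ of points contained in at least two of $A,B,C$ has $|S|\ge k$; choosing any $T\in\binom{S}{k}$ gives $T\subseteq A\cup B$, $T\subseteq A\cup C$, and $T\subseteq B\cup C$, so $\{A,B\},\{A,C\},\{B,C\}\in P_2(T)$. These three pairs already intersect in $\emptyset$, hence $\bigcap_{\mathcal{P}\in P_2(T)}\mathcal{P}=\emptyset$ while $P_2(T)\ne\emptyset$, contradicting the $2$-IPPS property. Likewise, if (IPPSb) fails for four distinct blocks $A_1,A_2,B_1,B_2$, I pick $T\in\binom{(A_1\cup A_2)\cap(B_1\cup B_2)}{k}$; then $\{A_1,A_2\}$ and $\{B_1,B_2\}$ are two disjoint members of $P_2(T)$, again forcing an empty intersection.

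For the converse, I assume the system is not a $2$-IPPS, so some $T$ has $P_2(T)\ne\emptyset$ with $\bigcap_{\mathcal{P}\in P_2(T)}\mathcal{P}=\emptyset$, and I produce a violation of (IPPSa) or (IPPSb). Using fact (i), I split into the case $T\in\mathcal{B}$ and the case where every parent set is a $2$-block pair. When $T\in\mathcal{B}$, the empty intersection forces a pair $\{B_1,B_2\}\in P_2(T)$ with $T\notin\{B_1,B_2\}$; then $T,B_1,B_2$ are distinct and $T\subseteq B_1\cup B_2$ makes every point of $T$ lie in at least two of $T,B_1,B_2$, violating (IPPSa). When all parent sets are pairs, I invoke the structural dichotomy for a family of edges with empty total intersection: either two of the edges are disjoint, which yields four distinct blocks violating (IPPSb), or the edges pairwise meet yet share no common vertex, in which case they must contain a triangle $\{A,B\},\{B,C\},\{A,C\}$, yielding distinct $A,B,C$ that violate (IPPSa).

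I expect the main obstacle to be the bookkeeping in this last step: verifying the edge-family dichotomy (pairwise-intersecting edges with no common vertex must form a triangle) and confirming that the singleton $\{T\}$ is correctly accounted for, so that no empty-intersection configuration escapes both (IPPSa) and (IPPSb). The remaining passages—translating $|S|\ge k$ into an actual $k$-subset $T$ and checking that the relevant pairs lie in $P_2(T)$—are routine once the point-in-at-least-two-blocks reformulation is in place.
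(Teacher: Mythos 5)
Your proof is correct and complete. Note that the paper itself offers no proof of this lemma --- it is quoted verbatim from the reference [GCKM] --- so there is nothing internal to compare against; your argument stands as a self-contained justification. The three ingredients you rely on all check out: the identity that $(A\cup B)\cap(A\cup C)\cap(B\cup C)$ is exactly the set of points lying in at least two of $A,B,C$; the observation that a singleton parent set $\{F\}\in P_2(T)$ forces $F=T$ (so singletons only arise when $T\in\mathcal{B}$); and the star/triangle dichotomy for a family of pairwise-intersecting $2$-sets, which you verify correctly (two distinct intersecting edges share exactly one vertex $a$; emptiness of the total intersection supplies a third edge avoiding $a$, which must then be $\{b,c\}$, closing the triangle). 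With those in hand, both contrapositive directions go through exactly as you describe, and the case split in the converse ($T\in\mathcal{B}$ giving an (IPPSa) violation via $T\subseteq B_1\cup B_2$, versus $T\notin\mathcal{B}$ giving either disjoint pairs for (IPPSb) or a triangle for (IPPSa)) is exhaustive.
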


In this paper we shall investigate the value $I_2(n,4)$ for $2$-IPPS$(n,4)$, especially, when $n$ is sufficiently large.
Notice that it is reasonable to consider large $n$ and relatively small $k$. In fact, the dealer needs a large set of base keys to accommodate amounts of authorized users, however, each authorized user is usually assigned with a limited number of base keys which are used as the user's inputs to the decryption devices~\cite{GCKM}.

In the literature, an upper bound of $I_2(n,4)$ which is better than that of Theorem~\ref{newupboundIPPS} was proven in~\cite{GCKM} using a graph theoretic approach.

\begin{lemma}[\cite{GCKM}]\label{lemma-upper-2IPPS}
$I_2(n,4)=o(n^2)$.
\end{lemma}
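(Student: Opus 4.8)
The plan is to use Lemma~\ref{2IPPSabc} to recast the problem as an extremal question about the point--block incidence of $(\mathcal{X},\mathcal{B})$ forbidding a dense ``grid'', and then to invoke a Ruzsa--Szemer\'edi-type removal argument, which produces the bound $o(n^2)$ rather than merely the trivial $O(n^2)$ coming from Theorem~\ref{newupboundIPPS}. First I would unpack the two conditions of Lemma~\ref{2IPPSabc}. The key structural consequence of (IPPSb) is that the incidence is \emph{grid-free}: there are no four distinct blocks $A_1,A_2,B_1,B_2\in\mathcal{B}$ and four \emph{distinct} points $p_1,p_2,p_3,p_4$ with $p_1\in A_1\cap B_1$, $p_2\in A_1\cap B_2$, $p_3\in A_2\cap B_1$ and $p_4\in A_2\cap B_2$, since all four points would then lie in $(A_1\cup A_2)\cap(B_1\cup B_2)$, contradicting $|(A_1\cup A_2)\cap(B_1\cup B_2)|<4$. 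Dually, from (IPPSa) I would record the following: whenever two blocks $A,B$ satisfy $A\cap B=\{x,y\}$, every other block meets the four remaining points $(A\cup B)\setminus\{x,y\}$ in at most one point (otherwise those two extra points, together with $x,y$, would force the majority set of $A,B,C$ to have size $\ge 4$). This controls both how often a given pair can be covered and how covered pairs can cluster.

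Next I would pass to the \emph{pair graph} $H$ on the ground set $\mathcal{X}$, in which $\{u,v\}$ is an edge exactly when some block contains both $u$ and $v$, so that every block spans a copy of $K_4$ in $H$. The consequence of (IPPSa) above says that, once the multiply-covered pairs are set aside (handled separately, see below), these $K_4$'s are essentially edge-disjoint and form an \emph{induced} family, while the grid-free property from (IPPSb) forbids two such $K_4$'s from being glued along the $2\times 2$ pattern described in the previous paragraph. Counting the blocks then reduces to bounding the number of copies of $K_4$ in a graph in which each edge participates in essentially one such configuration, which is precisely the regime governed by the $(6,3)$-theorem of Ruzsa and Szemer\'edi (equivalently, by the triangle removal lemma). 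That input yields an $o(n^2)$ bound in place of the trivial $O(n^2)$, which is exactly the improvement required.

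The main obstacle, and the step demanding the most care, is the reduction itself: I must check that the configuration \emph{jointly} forbidden by (IPPSa) and (IPPSb) genuinely matches the hypothesis of the removal argument, and, in particular, I must control the pairs of high multiplicity --- pairs lying in many blocks --- for which the associated $K_4$'s are not edge-disjoint in $H$ and the induced-family picture breaks down. The plan for this is to bound the contribution of such heavy pairs directly from the (IPPSa) consequence: each pair covered by two blocks meeting in exactly that pair forces the four ``cross'' pairs among the remaining points to be uncovered, so heavy pairs cannot themselves be numerous and their blocks form a sparse, separately estimable part of $\mathcal{B}$. Applying the removal-type $o(n^2)$ bound to the remaining essentially linear part, and adding the heavy-pair estimate, would then give $I_2(n,4)=o(n^2)$.
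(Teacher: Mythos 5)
First, a point of comparison: the paper does not prove this lemma at all --- it is imported from \cite{GCKM} with only the remark that the proof there is graph-theoretic --- so your proposal can only be judged on its own merits. Your two structural observations are correct and useful: (IPPSb) with $k=4$ forbids the ``grid'' (four distinct blocks whose four cross-intersections are four distinct points), and (IPPSa) implies that if $|A\cap B|=2$ then every further block meets $(A\cup B)\setminus(A\cap B)$ in at most one point. The gap is at the step you yourself flag as the crux: the reduction to the $(6,3)$-theorem is asserted, not performed, and the ``regime'' you describe is not sufficient for it. Having the $K_4$'s edge-disjoint and induced (``each edge participates in essentially one such configuration'') allows $\Theta(n^2)$ blocks: a Steiner system $S(2,4,n)$ has $n(n-1)/12$ blocks, pairwise intersecting in at most one point, and each of its $K_4$'s is induced in the pair graph. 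So the entire burden falls on grid-freeness, and there is no direct translation of a four-block forbidden configuration into the hypothesis of the $(6,3)$-theorem or the triangle removal lemma, which concern three edges. The natural reductions genuinely fail: a $2$-IPPS$(n,4)$ \emph{does} allow ``triangles of blocks'' (three blocks pairwise meeting in three distinct points), and this is exactly the configuration that truncating blocks to triples, or passing to links to get induced matchings, would need to exclude. A workable version of your plan must instead run a removal argument for $C_4$: in the pair graph a grid is precisely a rainbow $C_4$ (four edges lying in four distinct blocks); a $C_4$ whose edges lie in at most three blocks forces either a single block or a triangle of blocks, and since (by linearity) a triangle of blocks is determined by its three intersection points, there are only $O(n^3)=o(n^4)$ such $C_4$'s; the graph removal lemma then deletes $o(n^2)$ edges to destroy all $C_4$'s, and because $K_4$ minus one edge still contains a $C_4$, each of the edge-disjoint blocks accounts for at least two deleted edges, giving $o(n^2)$ blocks. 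None of this mechanism --- rainbow $C_4$'s, the counting of the \emph{permitted} $C_4$'s, the two-deletions-per-block accounting --- appears in your sketch, and the tool you name is not the one that applies.

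The heavy-pair part is also handled by a non sequitur. From ``the four cross pairs are uncovered'' it does not follow that heavy pairs are few: a single heavy pair can be the core of a sunflower with $\Omega(n)$ blocks, and there can be $\Theta(n)$ disjoint heavy cores simultaneously, so this portion of $\mathcal{B}$ is not small for the reason you give. What actually controls it is again (IPPSb), not the cross-pair remark: your (IPPSa) consequence shows that every block not containing a heavy pair $e$ meets the union of $e$'s petals in at most one point, whence each petal pair lies in a unique block; one then argues (using (IPPSb) on two blocks from one core against two blocks from others, and a separate short argument for blocks sharing three points) that the petal pairs form an essentially $C_4$-free graph, bounding this part by $O(n^{3/2})$. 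So your overall skeleton --- split off the multiply-covered pairs, then apply a removal-type theorem to the linear part --- is the right shape, but both halves rest on claims that are either unproved or false as stated, and the central reduction is exactly the part that cannot be waved through.
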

Also a lower bound of $I_2(n,4)$ which is slightly better than that of Theorem~\ref{newlowIPPS} can be found in~\cite{STamo}.
\begin{lemma}[\cite{STamo}]\label{coro-lower}
$I_2(n,4)=\Omega(n^{4/3+o(1)})$.
\end{lemma}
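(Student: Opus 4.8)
The plan is to build an explicit $2$-IPPS$(n,4)$ by splitting the ground set into four equal parts and realizing each block as a transversal determined by two ``coordinates'', so that the two conditions of Lemma~\ref{2IPPSabc} become purely additive constraints on the set of admissible coordinate pairs. Concretely, take $q$ a prime power with $4q\le n$ (prime-power gaps are negligible here), split $\mathcal{X}$ into four copies $X_1,\dots,X_4$ of $\mathbb{F}_q$, fix $c\in\mathbb{F}_q\setminus\{0,1\}$, and to each $(x,y)$ in a set $S\subseteq\mathbb{F}_q^2$ associate the block
\begin{equation*}
B(x,y)=\bigl\{(x,1),\,(y,2),\,(x+y,3),\,(x+cy,4)\bigr\}.
\end{equation*}
First I would check that, for \emph{every} $S$, any two such blocks agree in at most one coordinate: the four coincidences $x=x'$, $y=y'$, $x+y=x'+y'$, $x+cy=x'+cy'$ cannot hold in pairs once $c,c-1\neq 0$. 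Hence the family is linear, and linearity immediately yields (IPPSa): three blocks generate at most $\binom{3}{2}=3$ coordinatewise agreements in total, so fewer than $4$ points lie in at least two of them.

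The content is therefore (IPPSb). Writing $|(A_1\cup A_2)\cap(B_1\cup B_2)|$ as a sum of four coordinatewise overlaps and using linearity, one sees that a violation forces all four cross-pairs $A_\alpha,B_\beta$ to agree in exactly one coordinate, in a collision-free way accounting for four overlaps. The cleanest such configuration is a \emph{combinatorial rectangle} $\{x_1,x_2\}\times\{y_1,y_2\}\subseteq S$, which makes the overlaps in parts $1$ and $2$ equal to $2$ and already produces a forbidden intersection of size $4$. Thus $S$ must be rectangle-free ($K_{2,2}$-free as a bipartite subset of $X_1\times X_2$), which by the K\H{o}v\'ari--S\'os--Tur\'an bound caps $|S|$ at $(1+o(1))q^{3/2}$ --- exactly the target exponent. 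Moreover a Sidon set $\Lambda\subseteq\mathbb{F}_q$ realizes a rectangle-free $S=\{(x,y):y-x\in\Lambda\}$ of size $\sim q^{3/2}$, since a rectangle would force the nontrivial Sidon relation $(y_1-x_1)+(y_2-x_2)=(y_1-x_2)+(y_2-x_1)$ among four elements of $\Lambda$.

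The remaining --- and main --- obstacle is that rectangles are not the only way to violate (IPPSb): the four cross-pairs may instead agree in the ``derived'' coordinates $3$ and $4$, producing a bounded list of additional configurations. The key step I would carry out is to enumerate these finitely many agreement patterns and show that each is governed by a single \emph{translation-invariant} linear equation in the points of $S$. It then suffices to choose $S$ simultaneously rectangle-free and free of nontrivial solutions to all of these equations. This is where additive number theory enters a second time: intersecting the Sidon construction with a Behrend-type set avoiding the relevant invariant equations eliminates every remaining configuration at the cost of only a sub-polynomial factor $q^{o(1)}$. Combining the two inputs gives $|S|=q^{3/2-o(1)}$ admissible blocks, and since $n\asymp q$ we conclude $I_2(n,4)\ge|S|=\Omega\bigl(n^{3/2-o(1)}\bigr)$.

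I expect the delicate part to be the last step: verifying that the auxiliary patterns really do reduce to invariant linear equations (so that a Behrend-type set can avoid them) and, crucially, that avoiding them does not force $|S|$ below $q^{3/2-o(1)}$ --- i.e.\ that rectangle-freeness remains the dominant, exponent-determining constraint while the additional equations cost only the sub-polynomial factor responsible for the $o(1)$ loss.
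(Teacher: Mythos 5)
Note first that the paper never proves this lemma itself (it is quoted from \cite{STamo}); what you have sketched is, in outline, the paper's proof of the stronger Theorem~\ref{theorem-mian}, which would of course imply the lemma. Your skeleton does match that proof: a Kautz--Singleton-type family built from linear forms, the counting step forcing four disjoint singleton overlaps $A_u\cap B_v$, and the reduction of each overlap pattern to a translation-invariant linear equation in the block parameters. The gap sits exactly at the step you flag as delicate, and it is not a matter of routine verification. Writing $\lambda=y-x$ for your blocks, the patterns in which the two coordinate-pairs share exactly one coordinate reduce to equations of Ruzsa type $a\lambda_1+b\lambda_2=a\lambda_4+b\lambda_3$ with, in general, $(a,b)\neq(1,1)$: for your construction one gets, e.g., $2\lambda_1+\lambda_2=2\lambda_4+\lambda_3$ (coordinates $\{1,2\}$ and $\{1,3\}$) and $(1+c)\lambda_1+c\lambda_2=(1+c)\lambda_4+c\lambda_3$ (coordinates $\{1,2\}$ and $\{1,4\}$). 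Neither of your two ingredients handles these. A Behrend-type set cannot: any $\Lambda$ with no nontrivial solution to $2x+y=2z+w$ makes the map $(s,s')\mapsto 2s+s'$ injective, so $|\Lambda|\le\sqrt{3m}$, far below near-linear size. A Sidon set need not: $\{0,1,3,7\}$ is Sidon, yet $2\cdot 3+1=2\cdot 0+7$. What is needed here is Ruzsa's theorem (the paper's Lemma~\ref{lemma-rusza}): a single set of size $\sqrt{m}/2^{O(\log^{1/2}m)}$ avoiding $ax+by=az+bw$ for \emph{all} pairs $a,b$ simultaneously; this input is absent from your plan, and without it your set $\Lambda$ can contain violating configurations.

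The second, more structural, problem is your choice of a \emph{constant} $c$. For the disjoint coordinate-pair patterns, one of the two possible splits yields (after eliminating the free variables) $2(1+c)\lambda_1+(c-1)\lambda_2=2c\lambda_3+(1+c)\lambda_4$, a translation-invariant equation in which \emph{both} sides contain two variables. The Behrend sphere argument only rules out equations having a single variable on one side, and for such two-against-two invariant equations no solution-free sets of size $m^{1-o(1)}$ are known; moreover this cannot be repaired by tuning $c$, since for every admissible constant $c$ at least one of the arising equations has two variables with positive coefficients on each side. This is precisely why the paper, following Alon--Fischer--Szegedy, takes coordinate multipliers $2,5,q+5$ with the \emph{growing} parameter $q=\lceil 2^{\sqrt{\log m}}\rceil$ (and works over $\mathbb{Z}$ rather than $\mathbb{F}_q$, avoiding wraparound issues): every equation that then arises separates by scale into a Roth-type equation (single variable on one side) plus a Ruzsa-type equation, and these are exactly what Lemma~\ref{lemma-alon}, Lemma~\ref{lemma-rusza}, and the random-translate intersection of Lemma~\ref{lemma-main} are designed to kill. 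So completing your plan requires importing Ruzsa's simultaneous-avoidance theorem \emph{and} re-engineering the multipliers with a growing coefficient in place of the constant $c$; after those two repairs, your argument coincides with the paper's.
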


The objective of this paper is to improve this lower bound of $I_2(n,4)$ for $2$-IPPS$(n,4)$ using techniques in additive number theory.
Specifically, we will show that $I_2(n,4)=\Omega(n^{3/2-o(1)})$ by providing a construction for $2$-IPPS$(n,4)$.

The paper is organized as follows. In Section \ref{sec-addi}, we first present the useful lemmas in additive number theory. Our new construction for $2$-IPPS$(n,4)$ is exhibited in Section~\ref{sec-construction}. 
A discussion on IPPS and the known parent-identifying codes is provided in Section~\ref{sec-discussion}.  
Finally concluding remarks are made in Section~\ref{sec-conclude}.

\section{Additive number theory}
\label{sec-addi}

A linear equation with integer coefficients
\begin{equation}\label{eq-integer}
\sum_{1\le i\le r} a_ix_i=0
\end{equation}
in the $r$ unknowns $x_i$ is \textit{homogeneous} if $\sum_{1\le i\le r} a_i=0$.
It is readily seen that the homogeneous equation (\ref{eq-integer}) has the translation invariance property, that is, if $(x_1,  \ldots,x_r)$ is a solution of (\ref{eq-integer}), then for any $u\in \mathbb{Z}$, $(x_1+u,\ldots,x_r+u)$ is also a solution of (\ref{eq-integer}).
Considering a set $S\subseteq [n]=\{1,\ldots,n\}$, we say $S$ has {\it no non-trivial solution} to (\ref{eq-integer}) if
whenever $s_i\in S$ and $\sum_{1\le i\le r} a_is_i=0$, it follows that all $s_i$ are equal.
Notice that, by the translation invariance, if $S$ has no non-trivial solution to (\ref{eq-integer}),
then the same holds for any shift $(S+u)\cap [n]$, where $u\in \mathbb{Z}$ and $S+u=\{s+u: s\in S\}$.

The following lemma was proved in~\cite[Corollary 3.3]{Alon2001}. Throughout the paper the logarithm is taken in base $2$.
\begin{lemma}[\cite{Alon2001}]\label{lemma-alon}
For $q=\lceil 2^{\sqrt{\log m}}\rceil$ there exists 
a set $S_0\subseteq [m]$, $|S_0|\ge \frac{m}{2^{O(\log ^{3/4} m) }}$  with no non-trivial solution to any of the following equations
\begin{align}
    &2x+3y+qz-(q+5)w=0,\label{eq-1}\\
    &5x+(q+3)y-3z-(q+5)w=0,\label{eq-2}\\
    &5x+qy-2z-(q+3)w=0.\label{eq-3}
\end{align}
\end{lemma}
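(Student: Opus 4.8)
The plan is to construct $S_0$ by a (generalized) Behrend-type argument: encode integers through their digits in a carefully chosen base, then cut the resulting digit vectors down to a strictly convex level set.

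First I would fix a base $B$ and a dimension $d$ with $B^d \approx m$, and represent each candidate integer as $\sum_{j=0}^{d-1} x_j B^j$ with digits restricted to a sub-range $0 \le x_j < B'$. The purpose of taking $B$ much larger than $qB'$ --- and here the coefficients, all of magnitude $O(q)$, are what force $B$ to grow with $q$ --- is that when elements of the candidate set are substituted into any of \eqref{eq-1}, \eqref{eq-2}, \eqref{eq-3} and the sum is expanded in base $B$, no carries occur between digit positions. Consequently each equation holds for a quadruple of integers if and only if it holds coordinate-wise for the corresponding digit vectors in $\mathbb{Z}^d$; this linearization is the first key step, and it is the mechanism that ties the feasible base to the size of $q$.

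Next I would impose a common sphere condition $\sum_j x_j^2 = \rho$ on the digit vectors and, by pigeonhole over the $O(d(B')^2)$ possible radii, pass to a popular value of $\rho$, keeping a $1/O(d(B')^2)$ fraction of the vectors. It then remains to verify that on a fixed sphere the only solutions are trivial (all four digit vectors equal). For \eqref{eq-1} this is immediate from strict convexity of the ball: the equation rearranges to $(q+5)\,\mathbf{w} = 2\mathbf{x} + 3\mathbf{y} + q\mathbf{z}$, exhibiting $\mathbf{w}$ as a convex combination of $\mathbf{x},\mathbf{y},\mathbf{z}$ that itself lies on the sphere, which forces all four to coincide; the translation invariance recorded in Section~\ref{sec-addi} lets me normalize freely throughout.

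The main obstacle is \eqref{eq-2} and \eqref{eq-3}. Each of these is \emph{balanced}, splitting into two large positive and two large negative coefficients, so no single variable is isolated and the one-step convexity argument breaks down --- on a fixed sphere these equations genuinely admit non-trivial real solutions. What rescues them is a non-degeneracy property that I would confirm by a finite check: none of the three equations has a non-trivial solution in which the variables take only two distinct values, equivalently no nonempty proper subset of the coefficient sets $\{2,3,q,-(q+5)\}$, $\{5,q+3,-3,-(q+5)\}$, $\{5,q,-2,-(q+3)\}$ sums to zero. This is precisely the condition under which an invariant equation possesses solution-free subsets of size $m^{1-o(1)}$, and it is what the cited corollary of~\cite{Alon2001} exploits; establishing it for the balanced equations requires the refined construction behind that result rather than a bare sphere, and this is where the real work lies. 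Finally I would optimize the parameters: with $B \asymp qB'$ and $\log q \asymp (\log m)^{1/2}$, the digit count is $d \asymp \log m / \log B$, and balancing the per-digit range $B'$ against the pigeonhole loss minimizes an exponent of order $\sqrt{\log m \cdot \log q} \asymp (\log m)^{3/4}$, which yields the advertised bound $|S_0| \ge m / 2^{O(\log^{3/4} m)}$.
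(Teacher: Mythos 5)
First, a point of comparison: the paper never proves Lemma~\ref{lemma-alon} at all; it imports it verbatim from \cite[Corollary 3.3]{Alon2001}. So your proposal has to be measured against the argument in that reference, and the whole substance of that argument is exactly the part you leave out. Your sketch is sound precisely up to the point where you concede it stops being a proof. The no-carry digit expansion (valid once the base $B$ exceeds the sum of positive coefficients times the digit range $B'$), the pigeonhole over the $O(d(B')^2)$ sphere levels, and strict convexity do dispose of \eqref{eq-1}, because there the lone negative coefficient satisfies $q+5=2+3+q$, so the corresponding variable is a genuine convex combination of points on the sphere. But for the balanced equations \eqref{eq-2} and \eqref{eq-3} you supply no mechanism whatsoever: you correctly diagnose that on a sphere these equations only assert that two chords meet (so non-trivial real solutions exist), and you then appeal to the principle that absence of vanishing proper subsums ``is precisely the condition'' under which an invariant equation admits solution-free sets of size $m^{1-o(1)}$, conceding that establishing this ``requires the refined construction behind that result.'' That principle, applied to \eqref{eq-2} and \eqref{eq-3}, \emph{is} the theorem being proved --- the finite subsum check you describe is only the easy necessary direction --- so the proposal is circular at its core: the one case that constitutes the real difficulty is delegated to the very result under review.

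The gap is not cosmetic, for two reasons. First, the coefficients here grow with $m$ through $q=\lceil 2^{\sqrt{\log m}}\rceil$, so even a qualitative genus-one theorem for fixed equations (in the spirit of \cite{Ruzsa}) would not suffice: one needs the density loss quantified in terms of the coefficient size, and that dependence is exactly what produces the exponent $\log^{3/4}m$ via $\sqrt{\log m\cdot\log q}$. Your closing parameter optimization presupposes that the Behrend framework already kills all three equations, which contradicts your own (correct) observation that it does not kill two of them. Second, the natural repair inside your framework fails quantitatively: one can try to use the largeness of $q$ to force the two variables carrying the large coefficients to coincide, since the chord intersection point lies within distance $O(\sqrt{d}\,B'/q)$ of each of them, so integrality gives digit-wise equality once $q\gg\sqrt{d}\,B'$, after which \eqref{eq-2} and \eqref{eq-3} collapse to unbalanced equations that the sphere handles. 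But this constraint, combined with the no-carry requirement $B\gtrsim qB'$, forces $\log B\ge 2\log B'$ and hence caps the construction at $|S_0|\lesssim\sqrt{m}$, far below the claimed $m/2^{O(\log^{3/4}m)}$ (and useless for the intersection argument in Lemma~\ref{lemma-main}, which needs $S_0$ to have density $2^{-O(\log^{3/4}m)}$ in $[m]$). So the balanced equations require an idea genuinely beyond a bare sphere, and the proposal does not contain one.
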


In addition, we need the following result from~\cite[Theorem 7.3]{Ruzsa}.
\begin{lemma}[\cite{Ruzsa}]\label{lemma-rusza}
There exists a set $S_1\subseteq [m]$, $|S_1|\ge \frac{\sqrt{m}}{2^{O(\log ^{1/2} m)}}$ with no non-trivial solution to
\begin{equation}\label{eq-4}
ax+by=az+bw,
\end{equation}
where $a,b$ are positive integers.
\end{lemma}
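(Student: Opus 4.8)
The plan is to recognise Lemma~\ref{lemma-rusza} as a \emph{weighted Sidon} statement and to build $S_1$ from a classical Sidon-type construction. First I would normalise the coefficients: setting $d=\gcd(a,b)$, $a=da'$, $b=db'$, the equation $ax+by=az+bw$ is equivalent to $a'x+b'y=a'z+b'w$, so I may assume $\gcd(a,b)=1$ throughout. If $a=b$ the equation is just $x+y=z+w$, i.e.\ the ordinary Sidon ($B_2$) condition, and a perfect-difference-set construction (Singer, or the Erd\H{o}s--Tur\'an set) already supplies $S_1\subseteq[m]$ with $|S_1|=(1+o(1))\sqrt m$, which beats the stated bound. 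Thus the genuine case is $a\neq b$ with $\gcd(a,b)=1$.

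Next I would reformulate solution-freeness so as to isolate the obstruction. Rewriting $ax+by=az+bw$ as $a(x-z)=b(w-y)$ and using $\gcd(a,b)=1$, every solution satisfies $x-z=bt$ and $w-y=at$ for a common $t\in\mathbb{Z}$, and it is trivial precisely when $t=0$. Hence $S_1$ has no non-trivial solution if and only if its difference set $D=S_1-S_1$ contains no dilation pair $(at,bt)$ with $t\neq0$; equivalently, the map $(x,y)\mapsto ax+by$ is injective on $S_1\times S_1$, i.e.\ $|aS_1+bS_1|=|S_1|^2$. This makes clear why the right order of magnitude is $\sqrt m$: the defining equation is invariant (translation-invariant) and splits its four variables into two pairs, exactly as the Sidon equation does. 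One should resist reducing to a higher-order Sidon set: for $a=1,b=2$ the equation reads $x+y+y=z+w+w$, and a $B_3$ set is solution-free but has size only about $m^{1/3}$, so the two-pair structure must be exploited directly to reach $\sqrt m$.

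Finally I would produce such a set and control its density, and this is where the main obstacle lies. A near-maximal Sidon set is useless here: its difference set already occupies almost all of $[1,m]$ and therefore contains many dilation pairs $(at,bt)$, so a maximal $B_2$ set cannot be weighted-Sidon. The naive remedy -- take a random Sidon set of size $K$ and delete a point from each bad pair -- only reaches $K\approx m^{1/4}$, since the expected number of pairs $(at,bt)\subseteq D$ is of order $K^4/m$. To get to $\sqrt m$ one must instead use a structured construction whose difference set is simultaneously of size $\Theta(|S_1|^2)$ and free of the ratio $a:b$; this is precisely the content of Ruzsa's argument, and it is where all the work sits. The residual factor $2^{O(\log^{1/2}m)}$ is the cost of this thinning and is of Behrend type, the same shape of loss appearing in Lemma~\ref{lemma-alon}. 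Assembling the pieces -- normalisation, the difference-set reformulation, and the structured dilation-avoiding Sidon construction -- then yields $|S_1|\ge \sqrt m/2^{O(\log^{1/2}m)}$, as claimed.
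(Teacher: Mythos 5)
The first thing to note is that the paper contains no proof of Lemma~\ref{lemma-rusza} at all: the statement is quoted directly from Ruzsa \cite[Theorem 7.3]{Ruzsa}, so there is no internal argument to compare yours against. Judged as a self-contained proof, your proposal has a genuine gap, and you name it yourself. The preliminary steps are fine: the normalisation to $\gcd(a,b)=1$, the equivalence between solution-freeness, the absence of pairs $\{at,bt\}$ ($t\neq 0$) in $S_1-S_1$, and the injectivity of $(x,y)\mapsto ax+by$ on $S_1\times S_1$, and the observations that $B_3$-type sets only reach $m^{1/3}$ and that a maximal Sidon set cannot simply be reused. But then, at the point where a set of size $\sqrt m/2^{O(\log^{1/2}m)}$ actually has to be produced, you write that the required structured, dilation-avoiding construction ``is precisely the content of Ruzsa's argument, and it is where all the work sits.'' That construction \emph{is} the lemma; everything before it is bookkeeping. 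Deferring it to Ruzsa means the proposal establishes nothing beyond what is already being cited, and no step in it yields the claimed density.

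Two smaller points. First, your reading of ``trivial'' (solutions with $t=0$, i.e.\ $x=z$, $y=w$) is Ruzsa's, and it is the only reading under which the lemma is non-vacuous: under the paper's blanket definition (``all $s_i$ equal''), any set with $|S_1|\ge 2$ admits the solution $x=z$, $y=w$, $x\neq y$ of \eqref{eq-4}, so the statement would be false as literally worded. Your set-up is therefore more careful than the paper's own phrasing, and this care matters, since the paper's application in Theorem~\ref{theorem-mian} silently relies on the same convention. Second, your alteration heuristic is slightly off: with expected $K^4/m$ bad values of $t$, deletion survives as long as $K^4/m\ll K$, i.e.\ $K\approx m^{1/3}$, not $m^{1/4}$; this does not change your (correct) conclusion that alteration falls short of $\sqrt m$. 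In summary: the frame is right, but the proposal is a reduction plus a citation rather than a proof --- which, to be fair, is also exactly the paper's treatment; the difference is that the paper never claims otherwise.
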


Combining the above Lemmas \ref{lemma-alon} and \ref{lemma-rusza}, we have
\begin{lemma}\label{lemma-main}
There exists a set $S\subseteq [m]$ such that
\begin{equation}\label{eq-size-S}
|S|\ge \frac{\sqrt{m}}{2^{O(\log ^{3/4} m) }}
\end{equation}
with no non-trivial solution to any of the equations (\ref{eq-1}), (\ref{eq-2}), (\ref{eq-3}), (\ref{eq-4}).
\end{lemma}
\begin{proof}
We shall prove this lemma using a probabilistic method. 
Let $S_0$ and $S_1$ be the sets given in Lemma~\ref{lemma-alon} and \ref{lemma-rusza} respectively.
Take an integer $-m\le u\le m$ randomly and uniformly. 
By the translation invariance,
\begin{equation}\label{eq-intersection}
S=(S_0+u)\cap S_1
\end{equation}
has no non-trivial solution to any of the equations (\ref{eq-1}), (\ref{eq-2}), (\ref{eq-3}) and (\ref{eq-4}).
Now we argue the cardinality of $S$. Notice that each $s\in S_1$ has probability at least $2^{-O(\log ^{3/4}m)}$ to lie in the intersection (\ref{eq-intersection}).
Then by the linearity of expectation,
there exists a set $S$ such that
\begin{equation}
|S|\ge \frac{\sqrt{m}}{2^{O(\log ^{1/2} m)}} \cdot 2^{-O(\log ^{3/4}m)}=\frac{\sqrt{m}}{2^{O(\log ^{3/4} m) }}.
\end{equation}
This completes the proof.
\end{proof}

\section{A construction of $2$-IPPS$(n,4)$}
\label{sec-construction}

In this section we shall provide a construction for $2$-IPPS$(n,4)$ using Lemma~\ref{lemma-main}. Our construction is a modification of the one in~\cite{Alon2001} for codes with identifiable parent property. Also a discussion on the difference between IPPS considered in this paper and codes with identifiable parent property studied such as in~\cite{Alon2001} is referred to Section~\ref{sec-discussion}. 

\begin{theorem}\label{theorem-mian}
$I_2(n,4)=\Omega(n^{3/2-o(1)})$.
\end{theorem}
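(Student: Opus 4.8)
The plan is to realize a $2$-IPPS$(n,4)$ as a ``code'' whose ground set is split into four coordinate slots, choosing the codewords from the structured set $S$ furnished by Lemma~\ref{lemma-main}. Concretely, I would fix $m$, set $q=\lceil 2^{\sqrt{\log m}}\rceil$, take $S\subseteq[m]$ as in Lemma~\ref{lemma-main}, and put the ground set $\mathcal{X}=\{1,2,3,4\}\times[Q]$ with $Q=\Theta(qm)$, so that $n=4Q=m^{1+o(1)}$. To each pair $(a,s)$ with $a\in[m]$ and $s\in S$ I associate the block
\[
B_{a,s}=\bigl\{(1,a),\,(2,a+2s),\,(3,a+5s),\,(4,a+(q+5)s)\bigr\},
\]
all of whose entries lie in $[Q]$ by the choice of $Q$. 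The multipliers $0,2,5,q+5$ are selected precisely so that their six pairwise differences are $2,3,5,q,q+3,q+5$, matching the coefficients in (\ref{eq-1}), (\ref{eq-2}), (\ref{eq-3}). Two blocks meet in coordinate $i$ exactly when their $i$-th entries agree, and subtracting the first two coordinates shows distinct pairs $(a,s)$ give distinct blocks; hence $\mathcal{B}=\{B_{a,s}\}$ has exactly $|S|\cdot m$ blocks.

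The counting then follows immediately: Lemma~\ref{lemma-main} gives $|S|\ge \sqrt{m}/2^{O(\log^{3/4}m)}=m^{1/2-o(1)}$, so
\[
|\mathcal{B}|=|S|\cdot m\ge \frac{m^{3/2}}{2^{O(\log^{3/4}m)}}=m^{3/2-o(1)}.
\]
Since $n=m^{1+o(1)}$, we have $m=n^{1-o(1)}$, and therefore $|\mathcal{B}|=n^{3/2-o(1)}$, which is the claimed bound once $\mathcal{B}$ is shown to be a $2$-IPPS$(n,4)$.

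The core of the argument is verifying conditions (IPPSa) and (IPPSb) of Lemma~\ref{2IPPSabc} through the no-solution property of $S$. In the coordinate picture, an element $(i,v)$ lies in the triple pairwise-union intersection of (IPPSa) exactly when at least two of the three blocks carry entry $v$ in coordinate $i$, and in the two-pair intersection of (IPPSb) exactly when the two pairs share a common entry in coordinate $i$. Thus a violation (intersection of size $4$) forces entry coincidences in all four coordinates simultaneously. I would record each coincidence as an equality $a+\alpha_i s=a'+\alpha_i s'$, and eliminate the base points $a$ across the four coordinates to obtain a homogeneous linear relation purely among the slope parameters $s$. By the choice of multipliers this relation is, in each nondegenerate pattern, a relabeling or scaling of one of (\ref{eq-1}), (\ref{eq-2}), (\ref{eq-3}) for the unbalanced cases, or of the balanced form (\ref{eq-4}) for the symmetric two-pair cases; the no-non-trivial-solution property then forces the slopes to coincide, collapsing the configuration to non-distinct blocks and contradicting the hypothesis.

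The main obstacle is exactly this case analysis. Conditions (IPPSa) and (IPPSb) each split into many patterns according to which block supplies the shared entry in each of the four coordinates, and I must check that every pattern capable of producing a size-$4$ intersection reduces, after eliminating the $a$'s, to one of the four forbidden equations, while the degenerate patterns collapse on their own. The delicate point will be organizing the patterns so that the six pairwise differences $2,3,5,q,q+3,q+5$ together with the balanced shape $ax+by=az+bw$ exhaust all nondegenerate coincidences; this is precisely what the specific multipliers $0,2,5,q+5$ and the four equations of Lemma~\ref{lemma-main} are engineered to achieve.
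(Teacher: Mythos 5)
Your proposal is correct and is essentially identical to the paper's own proof: the same blocks $\{(1,p),(2,p+2s),(3,p+5s),(4,p+(q+5)s)\}$ over the set $S$ of Lemma~\ref{lemma-main}, the same counting, and the same verification strategy of eliminating base points to reduce every coincidence pattern to one of (\ref{eq-1})--(\ref{eq-4}). The case analysis you defer is exactly the paper's Cases 1--3 (coordinate pairs disjoint, overlapping in one, or overlapping in two indices), and it goes through just as you predict, with the disjoint patterns hitting (\ref{eq-1})--(\ref{eq-3}) and the overlapping ones hitting the balanced equation (\ref{eq-4}).
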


\begin{proof}
To prove this theorem, we shall show that for any $\epsilon >0$ and sufficiently large $n$, there exists a $2$-IPPS$(n,4)$ with size $n^{3/2-\epsilon}$.

Set $q=\lceil 2^{\sqrt{\log m}}\rceil$ and
\begin{equation}\label{eq-n}
n=4(q+6)m.
\end{equation}
Let $S\subseteq [m]$ be a set shown in Lemma~\ref{lemma-main}, which satisfies (\ref{eq-size-S}) and has no non-trivial solution to any of the equations (\ref{eq-1}), (\ref{eq-2}), (\ref{eq-3}), (\ref{eq-4}).
Define a set system $(\mathcal{X},\mathcal{B})$ with $\mathcal{X}=[4]\times [(q+6)m]$ and
\begin{equation}\label{eq-define-B}
\mathcal{B}=\Big\{\{(1,p),(2,p+2s),(3,p+5s),(4,p+(q+5)s)\}:\, 1\le p\le m, s\in S\Big\}.
\end{equation}
It is easy to see that
\begin{equation}
|\mathcal{B}|\ge m\frac{\sqrt{m}}{2^{O(\log ^{3/4} m)}}= \frac{m^{3/2}}{2^{O(\log ^{3/4} m)}}.
\end{equation}
Consequently, for any $\epsilon>0$ there exists an $m_0(\epsilon)>0$ (and hence an $n_0(\epsilon)>0$ by (\ref{eq-n})) such that for any $n>n_0(\epsilon)$, we have
\begin{equation}
|\mathcal{B}|\ge n^{3/2-\epsilon}.
\end{equation}

Now it suffices to claim that the set system $(\mathcal{X},\mathcal{B})$ defined in (\ref{eq-define-B}) is a $2$-IPPS$(n,4)$.
First notice that for any distinct $B_1,B_2\in \mathcal{B}$ we have \begin{equation}\label{eq-assump-joint}
|B_1\cap B_2|\le 1,
\end{equation}
since the four linear functions (in $p$ and $s$) used in~\eqref{eq-define-B} are pairwise non-collinear.
%
Hence for any three distinct blocks $A,B,C\in \mathcal{B}$ we have
\begin{equation}
\begin{split}
&|(A\cup B)\cap (A\cup C)\cap (B\cup C)|\\
&=|(A\cap B) \cup (A\cap C) \cup(B\cap C)|\\
&\le |A\cap B| + |A\cap C| + |B\cap C|\\
&\le 3 <4.
\end{split}
\end{equation}
This implies that $(\mathcal{X},\mathcal{B})$ satisfies (IPPSa) in Lemma~\ref{2IPPSabc}.
It remains to show that the case (IPPSb) also holds.

Suppose $A_1,A_2,B_1,B_2$ are four distinct blocks in $\mathcal{B}$ with
\begin{equation}
\begin{split}
A_1&=\{(1,p_1), (2, p_1+2x), (3,p_1+5x), (4,p_1+(q+5)x)\},\\
A_2&=\{(1,p_2), (2, p_2+2y), (3,p_2+5y), (4,p_2+(q+5)y)\},\\
B_1&=\{(1,p_3), (2, p_3+2z), (3,p_3+5z), (4,p_3+(q+5)z)\},\\
B_2&=\{(1,p_4), (2, p_4+2w), (3,p_4+5w), (4,p_4+(q+5)w)\},
\end{split}
\end{equation}
where $1\le p_1,p_2,p_3,p_4\le m$ and $x,y,z,w\in S$. Now we shall show that
\begin{equation}
|(A_1\cup A_2)\cap (B_1\cup B_2)|<4.
\end{equation}
If not, we might assume there exists a $4$-subset $T\subseteq [4]\times [(q+6)m]$ with
\begin{equation}\label{eq-T}
T=\{(i,\alpha),(j,\beta),(k,\gamma),(l,\delta)\} \subseteq (A_1\cup A_2)\cap (B_1\cup B_2),
\end{equation}
where $1\le i,j,k,l\le 4$ might be the same.
Now we would like to derive contradictions.

First notice that 
\begin{align*}
    4=|T|\overset{~\eqref{eq-T}}{\le} &| (A_1\cup A_2)\cap (B_1\cup B_2)|\\
    =\ &\Big|\bigcup_{u,v\in \{1,2\}} (A_u\cap B_v)\Big|\\
    \le \ & \sum_{u,v\in \{1,2\}} |A_u\cap B_v|
    \overset{~\eqref{eq-assump-joint}}{\le } 4.
\end{align*}
This implies that every set $ A_u\cap B_v$, $u,v\in \{1,2\}$ consists of a single point and these points are distinct. In other words, we have 
\begin{equation}
|T\cap A_1|=|T\cap A_2|=|T\cap B_1|=|T\cap B_2|=2.
\end{equation}

Recall that $T\subseteq A_1\cup A_2$, 
without loss of generality, we may assume
\begin{equation}
\begin{split}
&T\cap A_1=\{(i,\alpha),(j,\beta)\},\\
&T\cap A_2=\{(k,\gamma),(l,\delta)\},
\end{split}
\end{equation}
where $1\le i,j,k,l\le 4$, $i\ne j$ and $k\ne l$.
The following analysis is divided into cases according to the intersection of $\{i,j\}$ and $\{k,l\}$.

\textit{Case 1.} Consider $\{i,j\}\cap \{k,l\}=\emptyset$. By the symmetry of $A_1$ and $A_2$, we only need to consider the following three cases
\begin{eqnarray}\label{eq-cases}
    \begin{array}{l}
      \text{(case 1.1)} \ \ \ i=1,\ j=2,\ k=3,\ l=4, \\
      \text{(case 1.2)} \ \ \ i=1,\ j=3,\ k=2,\ l=4, \\
      \text{(case 1.3)} \ \ \ i=1,\ j=4,\ k=2,\ l=3.
    \end{array}
\end{eqnarray}

\textit{Case 1.1}
If $i=1$, $j=2$, $k=3$, $l=4$, then it follows that, up to symmetry of $B_1$ and $B_2$, either
\begin{eqnarray}\label{eq-case1.1-1}
  \left\{
    \begin{array}{l}
      \alpha=p_1=p_3 \\
      \beta=p_1+2x=p_4+2w \\
      \gamma=p_2+5y=p_3+5z \\
      \delta=p_2+(q+5)y=  p_4+(q+5)w
    \end{array}
  \right.
\end{eqnarray}
or
\begin{eqnarray}\label{eq-case1.1-2}
  \left\{
    \begin{array}{l}
      \alpha=p_1=p_3 \\
      \beta=p_1+2x=p_4+2w \\
      \gamma=p_2+5y=p_4+5w \\
      \delta=p_2+(q+5)y=  p_3+(q+5)z.
    \end{array}
  \right.
\end{eqnarray}
From (\ref{eq-case1.1-1}), we obtain
\begin{equation}\label{eq-case1.1-eq}
2x-qy-5z+(q+3)w=0.
\end{equation}
Since $x,y,z,w\in S$ and $S$ has no non-trivial solution to equation (\ref{eq-3}) (and also (\ref{eq-case1.1-eq})), we have $x=y=z=w$. Together with $p_1=p_3$ in (\ref{eq-case1.1-1}), we get $A_1=B_1$, a contradiction to the assumption that $A_1,A_2,B_1,B_2$ are distinct.
Similarly, from (\ref{eq-case1.1-2}), we have
\begin{equation}\label{eq-case1.1.2-eq}
2x+qy-(q+5)z+3w=0.
\end{equation}
Since $x,y,z,w\in S$ and $S$ has no non-trivial solution to equation (\ref{eq-1}) (and also (\ref{eq-case1.1.2-eq})), we obtain $x=y=z=w$, which together with $p_1=p_3$ in (\ref{eq-case1.1-2}) results $A_1=B_1$, a contradiction to our assumption that $A_1,A_2,B_1,B_2$ are distinct.

\textit{Case 1.2} If $i=1$, $j=3$, $k=2$, $l=4$, then by the symmetry of $B_1$ and $B_2$, we have
\begin{eqnarray}\label{eq-case1.2-1}
  \left\{
    \begin{array}{l}
      \alpha=p_1=p_3 \\
      \beta=p_1+5x=p_4+5w \\
      \gamma=p_2+2y=p_3+2z \\
      \delta=p_2+(q+5)y=  p_4+(q+5)w
    \end{array}
  \right.
\end{eqnarray}
or
\begin{eqnarray}\label{eq-case1.2-2}
  \left\{
    \begin{array}{l}
      \alpha=p_1=p_3 \\
      \beta=p_1+5x=p_4+5w \\
      \gamma=p_2+2y=p_4+2w \\
      \delta=p_2+(q+5)y=  p_3+(q+5)z.
    \end{array}
  \right.
\end{eqnarray}
For equations (\ref{eq-case1.2-1}), one could derive a contradiction via equation (\ref{eq-3}) in a similar way as for equations (\ref{eq-case1.1-1}).
According to (\ref{eq-case1.2-2}), we obtain
\begin{equation}\label{eq-case1.2-eq}
5x+(q+3)y-(q+5)z-3w=0.
\end{equation}
Since $x,y,z,w\in S$ and $S$ has no non-trivial solution to equation (\ref{eq-2}) (and also (\ref{eq-case1.2-eq})),
we have $x=y=z=w$, which together with $p_1=p_3$ in (\ref{eq-case1.2-2}) shows that $A_1=B_1$, a contradiction to our assumption that $A_1$ and $B_1$ are distinct.

\textit{Case 1.3} If $i=1$, $j=4$, $k=2$, $l=3$, then by the symmetry of $B_1$ and $B_2$, we have
\begin{eqnarray}\label{eq-case1.3-1}
  \left\{
    \begin{array}{l}
      \alpha=p_1=p_3 \\
      \beta=p_1+(q+5)x=p_4+(q+5)w \\
      \gamma=p_2+2y=p_3+2z \\
      \delta=p_2+5y=  p_4+5w
    \end{array}
  \right.
\end{eqnarray}
or
\begin{eqnarray}\label{eq-case1.3-2}
  \left\{
    \begin{array}{l}
      \alpha=p_1=p_3 \\
      \beta=p_1+(q+5)x=p_4+(q+5)w \\
      \gamma=p_2+2y=p_4+2w \\
      \delta=p_2+5y=  p_3+5z.
    \end{array}
  \right.
\end{eqnarray}
For equations (\ref{eq-case1.3-1}), one could derive a contradiction via equation (\ref{eq-1}) as the way for equations (\ref{eq-case1.1-2}).
Also for equations (\ref{eq-case1.3-2}), a contradiction can be derived via equation (\ref{eq-2}) following the way for equations (\ref{eq-case1.2-2}).

\textit{Case 2.} Consider $|\{i,j\}\cap \{k,l\}|=1$. Recall from~\eqref{eq-assump-joint} that any two distinct blocks in $\mathcal{B}$ have at most one common point.
If $i=k=1$, $j=2$, $l=3$, by the symmetry of $B_1$ and $B_2$, we may assume
 \begin{eqnarray}\label{eq-case2}
  \left\{
    \begin{array}{l}
      \alpha=p_1=p_3 \\
      \beta=p_1+2x=p_4+2w \\
      \gamma=p_2=p_4 \\
      \delta=p_2+5y=  p_3+5z,
    \end{array}
  \right.
\end{eqnarray}
yielding
\begin{equation}\label{eq-case2-eq}
2x+5y=2w+5z.
\end{equation}
Since $x,y,z,w\in S$ and $S$ has no non-trivial solution to equation (\ref{eq-4}) (and hence equation (\ref{eq-case2-eq})), we have
$x=y=z=w$. Together with $p_1=p_3$ and $p_2=p_4$ in (\ref{eq-case2}), we get $A_1=B_1$ and $A_2=B_2$, a contradiction to our assumption that $A_1,A_2,B_1,B_2$ are pairwise distinct.
Accordingly, for any $1\le i,j,k,l\le 4$ such that $|\{i,j\}\cap \{k,l\}|=1$, one could derive a contradiction via equation (\ref{eq-4}) in the same way.

\textit{Case 3.} Consider $|\{i,j\}\cap \{k,l\}|=2$. Without loss of generality, assume $i=k=3$, $j=l=4$. By the symmetry, we could have
 \begin{eqnarray}\label{eq-case3}
  \left\{
    \begin{array}{l}
      \alpha=p_1+5x=p_3+5z \\
      \beta=p_1+(q+5)x=p_4+(q+5)w \\
      \gamma=p_2+5y=p_4+5w \\
      \delta=p_2+(q+5)y=  p_3+(q+5)z,
    \end{array}
  \right.
\end{eqnarray}
resulting
\begin{equation}\label{eq-case3-eq}
x+y=z+w.
\end{equation}
Recall that $x,y,z,w\in S$ and $S$ has no non-trivial solution to equation (\ref{eq-4}) (and also equation (\ref{eq-case3-eq})). Hence we have $x=y=z=w$, which, together with (\ref{eq-case3}), implies $A_1=B_1$ and $A_2=B_2$, a contradiction to our assumption that $A_1,A_2,B_1,B_2$ are pairwise distinct. In the meanwhile, for any other values of $1\le i,j,k,l\le 4$ such that $|\{i,j\}\cap \{k,l\}|=2$, one could argue in a similar way and derive a contradiction via equation (\ref{eq-4}).

Based on the foregoing, the cases (IPPSa) and (IPPSb) in Lemma~\ref{2IPPSabc} hold for the set system $(\mathcal{X},\mathcal{B})$ defined in (\ref{eq-define-B}), implying that $(\mathcal{X},\mathcal{B})$ is a $2$-IPPS$(n,4)$.
This completes the proof.
\end{proof}

\section{Discussion on IPP set systems and IPP codes}
\label{sec-discussion}

In this section, we shall discuss the similarities and differences between parent-identifying set systems and parent-identifying codes. 
We first recall the notion of parent-identifying codes which was proposed in~\cite{Hollmann1998}.

\begin{definition}\label{defIPPC}
A $q$-ary \textit{$t$-parent-identifying code}, denoted as $t$-IPPC$(n,q)$, is a set $\mathcal{C}\subseteq [q]^n$ with the property that for any
word $\mathbf{d}\in [q]^n$, either $S_t(\mathbf{d})$ is empty, or
\begin{equation*}
\bigcap_{\mathcal{S}\in S_t(\mathbf{d})}\mathcal{S}\ne \emptyset,
\end{equation*}
where
\begin{equation*}
S_t(\mathbf{d})=\left\{\mathcal{S}\subseteq \mathcal{C}:\ \mathbf{d}\in \text{desc}(\mathcal{S}), |\mathcal{S}|\le t \right\}
\end{equation*}
and 
\begin{equation*}
    \text{desc}(\mathcal{S})=\{\mathbf{x}=(x_1,\ldots,x_n)\in [q]^n: x_i\in \{s_i: \mathbf{s}\in \mathcal{S}\}, \forall\, i \}.
\end{equation*}
\end{definition}

On the one hand, IPP codes and IPP set systems share several similar features since they are both defined by using the parent-identifying property (but in different scenarios). A unified perspective for analyzing these structures is referred to~\cite{GCKM}. 
The following is one of their typical common performances.

\begin{lemma}[\cite{GCKM}] Let $\mu=\lfloor (t/2+1)^2\rfloor$. 
\begin{itemize}
    \item[(1)] 
    A code $\mathcal{C}\subseteq [q]^n$ is a $t$-IPPC$(n,q)$ if and only if every subcode $\mathcal{C}'\subseteq \mathcal{C}$ such that $|\mathcal{C}'|\le \mu$ is a $t$-IPPC$(n,q)$.
    \item[(2)]  A set system $(\mathcal{X},\mathcal{B})$ is a $t$-IPPS$(n,k)$ if and only if every $(\mathcal{X},\mathcal{B}')$, where $\mathcal{B}'\subseteq \mathcal{B}$ such that $|\mathcal{B}'|\le \mu$, is a $t$-IPPS$(n,k)$.
\end{itemize}
\end{lemma}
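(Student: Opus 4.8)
The plan is to prove both equivalences by the same mechanism, so I focus on part~(2) for set systems and note at the end that part~(1) is identical once \emph{blocks} are replaced by codewords and \emph{parent sets covering $T$} by coalitions $\mathcal{S}$ with $\mathbf{d}\in\mathrm{desc}(\mathcal{S})$. One direction is immediate by monotonicity: if $(\mathcal{X},\mathcal{B})$ is a $t$-IPPS$(n,k)$ and $\mathcal{B}'\subseteq\mathcal{B}$, then for every $k$-set $T$ one has $P_t^{\mathcal{B}'}(T)\subseteq P_t^{\mathcal{B}}(T)$, since fewer available blocks can only shrink the collection of possible parent sets. Hence whenever $P_t^{\mathcal{B}'}(T)\neq\emptyset$, a block common to all members of $P_t^{\mathcal{B}}(T)$ (which exists by hypothesis) lies in every member of the smaller collection as well. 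Thus the IPPS property is inherited by every subfamily, in particular by all of size at most $\mu$.

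For the converse I would argue by contrapositive: assuming $(\mathcal{X},\mathcal{B})$ is \emph{not} a $t$-IPPS, I produce a subfamily $\mathcal{B}'$ with $|\mathcal{B}'|\le\mu$ that is still not a $t$-IPPS. By Definition~\ref{defIPPS} there is a $k$-set $T$ with $P_t(T)\neq\emptyset$ but $\bigcap_{\mathcal{P}\in P_t(T)}\mathcal{P}=\emptyset$. As $\mathcal{B}$ is finite, I may choose a subfamily $\{\mathcal{P}_1,\dots,\mathcal{P}_r\}\subseteq P_t(T)$ of \emph{minimum} cardinality whose intersection is empty. Putting $\mathcal{B}'=\bigcup_{i=1}^r\mathcal{P}_i$, each $\mathcal{P}_i$ is a parent set of $T$ contained in $\mathcal{B}'$, so $\bigcap_{\mathcal{P}\in P_t^{\mathcal{B}'}(T)}\mathcal{P}\subseteq\bigcap_{i}\mathcal{P}_i=\emptyset$ while $P_t^{\mathcal{B}'}(T)\neq\emptyset$; hence $(\mathcal{X},\mathcal{B}')$ already fails the IPPS condition, and everything reduces to bounding $|\mathcal{B}'|$.

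The heart of the matter is a Helly-type count on this minimal family. By minimality, for each $i$ the intersection $\bigcap_{j\ne i}\mathcal{P}_j$ is nonempty, so I can pick a \emph{witness block} $b_i\in\bigcap_{j\ne i}\mathcal{P}_j$; since the full intersection is empty, $b_i\notin\mathcal{P}_i$, and a short check shows the $b_i$ are pairwise distinct. Consequently each $\mathcal{P}_i$ contains the $r-1$ distinct blocks $\{b_j:j\ne i\}$, forcing $r-1\le|\mathcal{P}_i|\le t$, i.e.\ $r\le t+1$. Moreover each $\mathcal{P}_i$ consists of these $r-1$ witnesses together with at most $t-(r-1)$ further blocks, so counting the $r$ witnesses once and overcounting the rest gives
\begin{equation*}
|\mathcal{B}'|=\Big|\bigcup_{i=1}^r\mathcal{P}_i\Big|\le r+r\bigl(t-r+1\bigr)=r\,(t-r+2).
\end{equation*}

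The one genuinely delicate point is extracting the sharp constant, and I expect this to be where the precise value $\mu$ is pinned down rather than in the combinatorics. I must verify $\max_{1\le r\le t+1} r(t-r+2)=\mu=\lfloor(t/2+1)^2\rfloor$. Since $r\mapsto r(t-r+2)$ is a downward parabola with vertex at $r=(t+2)/2$, its integer maximum is attained at $r=\lfloor(t+2)/2\rfloor$ (or the neighbouring value when $t$ is odd), and a direct evaluation gives $(t/2+1)^2$ for even $t$ and $(t+1)(t+3)/4$ for odd $t$, both equal to $\lfloor(t/2+1)^2\rfloor$. This yields $|\mathcal{B}'|\le\mu$ and closes the contrapositive. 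The identical minimal-coalition/witness-codeword argument proves part~(1), the bound $r\le t+1$ there coming from $|\mathcal{S}_i|\le t$.
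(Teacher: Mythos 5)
Your proof is correct and complete: the monotonicity direction, the passage to a minimal subfamily of $P_t(T)$ with empty intersection, the distinct witness blocks $b_i\in\bigcap_{j\ne i}\mathcal{P}_j\setminus\mathcal{P}_i$ forcing $r-1\le|\mathcal{P}_i|\le t$, and the count $|\mathcal{B}'|\le r(t-r+2)\le\max_{1\le r\le t+1}r(t-r+2)=\lfloor(t/2+1)^2\rfloor$ all check out (one could add the trivial remark that $r\ge 2$, since no single nonempty $\mathcal{P}\in P_t(T)$ can have empty intersection, so the witness construction is well-posed). Note that the paper itself states this lemma without proof, citing~\cite{GCKM}; your minimal-coalition/witness argument is precisely the standard proof of this equivalence found in that reference, so there is nothing to contrast.
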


On the other hand, as pointed out in~\cite{Stinson1998}, IPP set systems could be regarded as a kind of generalization of IPP codes in the sense that, according to the model in Section~\ref{intro}, IPP codes are considered on the basis of an $n$-out-of-$n$ threshold secret sharing scheme and IPP set systems are concerned with respect to a more general $k$-out-of-$n$ threshold secret sharing scheme.  
Then one nature question is ``Can we derive IPP set systems directly from IPP codes?". At present we do not have a positive or negative answer in general. One feasible way of constructing set systems from codes, as we did in~\eqref{eq-define-B}, is via the so-called Kautz-Singleton construction, which was invented more than fifty years ago, for superimposed codes~\cite{KS1964}, now better known as cover-free codes or families~\cite{EFF1982}. 
However only $t$-IPP codes plugging into the Kautz-Singleton construction cannot directly yield $t$-IPP set systems since there are more requirements in IPP set systems than in IPP codes (see Example~\ref{example-1} below). 

\begin{example}\label{example-1} 
The ternary Hamming code of length $4$ 
\begin{equation*}
    \mathcal{C}=\{1111,1222,1333,2123,2231,2312,3132,3213,3321\}\subseteq \{1,2,3\}^4
\end{equation*}
is claimed as a $2$-IPPC$(4,3)$ in~\cite{Hollmann1998}. Plugging into the Kautz-Singleton construction, we obtain a set system $(\mathcal{X},\mathcal{B})$ where $\mathcal{X}=[4]\times [3]$ and 
\begin{align*}
    \mathcal{B}=\big\{&B_1=\big( (1,1), (2,1), (3,1), (4,1)   \big),\  B_2=\big( (1,1), (2,2), (3,2), (4,2)   \big),\\
    &B_3=\big( (1,1), (2,3), (3,3), (4,3)   \big),\  B_4=\big( (1,2), (2,1), (3,2), (4,3)   \big),\\
    &B_5=\big( (1,2), (2,2), (3,3), (4,1)   \big),\  B_6=\big( (1,2), (2,3), (3,1), (4,2)   \big),\\
    &B_7=\big( (1,3), (2,1), (3,3), (4,2)   \big),\  B_8=\big( (1,3), (2,2), (3,1), (4,3)   \big),\\
    &B_9=\big( (1,3), (2,3), (3,2), (4,1)   \big)
    \big\}.
\end{align*}
Now we claim that $(\mathcal{X},\mathcal{B})$ is not a $2$-IPPS$(12,4)$. In fact, considering four distinct blocks $B_1,B_2,B_4,B_5\in \mathcal{B}$, we have 
\begin{equation*}
    \{(1,1),(1,2),(3,2),(4,1)\}\subseteq (B_1\cup B_4) \cap (B_2\cup B_5),
\end{equation*}
a contradiction to the requirement (IPPSb) in Lemma~\ref{2IPPSabc}. \qed 
\end{example}

As can be seen, if one would like to derive IPP set systems from IPP codes, more techniques other than the Kautz-Singleton construction are required.
Indeed, our construction for $2$-IPPS$(n,4)$ in Theorem~\ref{theorem-mian} is with the help of Lemmas~\ref{lemma-rusza} and~\ref{lemma-main} in addition to the IPP code. It would be also interesting to find a general way of converting IPP codes to IPP set systems. 
Some discussions about this as well as the relation between IPP set systems and binary constant weight codes can be found in~\cite{EK2017}. Also some variants of IPP codes and IPP set systems are referred to~\cite{E2019} and~\cite{Kaba2019}.


\section{Concluding remarks}
\label{sec-conclude}

In this paper we provided a construction for $2$-IPPS$(n,4)$ using techniques in additive number theory.
This gives a lower bound on the maximum size of a $2$-IPPS$(n,4)$, that is $I_2(n,4)=\Omega(n^{3/2-o(1)})$, which improves the best existing result $I_2(n,4)=\Omega(n^{4/3+o(1)})$.
Together with the best known upper bound in Lemma~\ref{lemma-upper-2IPPS}, we have
$$\Omega(n^{3/2-o(1)})=I_2(n,4)=o(n^2).$$
It would be of interest to continue to narrow the gap between the upper and lower bounds, especially, on the order of magnitude of $I_2(n,4)$, as well as to explore a generalization of the construction for $2$-IPPS$(n,4)$ in Theorem~\ref{theorem-mian}.

It is worth noting that in \cite[Theorem 3.2]{Ruzsa}, Ruzsa proved that for every $S_1 \subseteq [m]$ with no non-trivial solution to equation (\ref{eq-4}), we have $$|S_1| = O(\sqrt{m}).$$
This implies an upper bound on the cardinality of the set $S \subseteq [m]$ with no non-trivial solution to any of the equations (\ref{eq-1}), (\ref{eq-2}), (\ref{eq-3}) and (\ref{eq-4}), that is, $$|S|\le |S_1| = O(\sqrt{m}).$$
Hence we could see that the set $S \subseteq[m]$ shown in Lemma~\ref{lemma-main} has the (almost) best possible order of magnitude.
Based on this,
using the same construction as in our proof of Theorem~\ref{theorem-mian} cannot give better estimates such as $I_2(n, 4) = \Omega(n^{3/2+o(1)})$.

\section*{acknowledgements}
The authors express their gratitude to the two anonymous
reviewers for their detailed and constructive comments which
are very helpful to the improvement of the presentation of this
paper, and to the associate editor for
his/her excellent editorial job. 
S. Satake has been supported by Grant-in-Aid for JSPS Fellows 18J11282 of the Japan Society for the Promotion of Science.

\bibliographystyle{spphys}       


\end{document}